\newcommand{\po}{\left(}
\newcommand{\pf}{\right)}
\newcommand{\co}{\left[}
\newcommand{\cf}{\right]}
\newcommand{\cco}{\llbracket}
\newcommand{\ccf}{\rrbracket}
\newcommand{\R}{\mathbb R}
\newcommand{\N}{\mathbb N} 
\newcommand{\dd}{\text{d}}
\newcommand{\na}{\nabla}
\newcommand{\1}{\mathbbm{1}} 
\newcommand{\bx}{\bm{x}}
\newcommand{\bv}{\bm{v}}
\newcommand{\by}{\bm{y}}
\newcommand{\bX}{\bm{X}}
\newcommand{\bY}{\bm{Y}}
\newcommand{\Var}{\mathrm{Var}}
\newtheorem{thm}{Theorem}
\newtheorem{assu}{Assumption}
\newtheorem*{assu*}{Assumption}
\newtheorem{lem}[thm]{Lemma}
\newtheorem{cor}[thm]{Corollary}
\newtheorem{rem}{Remark}
\title{Uniform log-Sobolev inequalities for mean field
particles   beyond flat-convexity}
\author{Pierre Monmarché}
\begin{document}
\maketitle

\begin{abstract}
In the nice recent work \cite{Songbo}, S. Wang  established uniform log-Sobolev inequalities for mean field particles when the energy is flat convex. In this  note we comment how to extend his proof to some semi-convex energies provided the curvature lower-bound is not too negative. It is not clear that this could be obtained simply by applying a posteriori a perturbation argument to Wang's result. Rather, we follow his proof and, at steps where the convexity is used, we notice that the uniform conditional or local functional inequalities assumed give some room to allow for a bit of concavity. In particular, this allows to recover other  previous results on non-convex systems at high temperature or weak coupling, and to consider situations which mix flat-convexity and weak coupling.
\end{abstract}

\section{Settings and results}\label{sec:intro}

\subsection{Notations}

Denote by $\mathcal P_2(\R^d)$ the set of probability measures on $\R^d$ with finite moments and by $\mathcal W_2$ the associated $L^2$ Wasserstein distance. Consider a function $F:\mathcal P_2(\R^d) \rightarrow \R$ and $N\in\N^*$. We write
\[\mu_{\bx} = \frac1N \sum_{i=1}^N \delta_{x_i} \]
the empirical measure of a vector $\bx=(x_1,\dots,x_N) \in (\R^d)^N$, and
\begin{equation}
\label{eq:UN}
U_N(\bx) = N F(\mu_{\bx})\,. 
\end{equation}
For $\bx\in\R^{dN}$ and $i\in\cco 1,N\ccf$, we write $\bx_{\neq i} = (x_j)_{j\in\cco 1,N\ccf\setminus\{i\}}$.

Assuming that $Z_N = \int_{\R^{dN}} e^{-U_N(\bx)} \dd \bx < \infty$, we are interested in the Gibbs measure 
\begin{equation}
\label{eq:Gibbs}
m_*^N(\bx) = \frac{e^{-U_N(\bx)}}{Z_N}\,. 
\end{equation}
Our goal is to prove, under suitable conditions on $F$, Poincaré or log-Sobolev inequalities for $m_N$, uniformly in $N$ large enough. Here we say that a probability measure $\mu$ on $\R^d$ satisfies a Poincaré inequality with constant $\rho>0$ if for all $f\in \mathcal C_b^1$ ($\mathcal C^1$ with bounded derivatives),
\[ \int_{\R^d} f^2 \dd \mu - \po \int_{\R^d} f\dd \mu \pf^2 \leqslant \frac{1}{\rho } \int_{\R^d}|\na f|^2 \dd \mu \,,\]
and we say that it satisfies a log-Sobolev inequality (LSI) with constant $\rho>0$ if for all probability measure $\nu \ll \mu$ with $\frac{\dd \nu}{\dd \mu} \in \mathcal C_b^1$,
\[ \mathcal H\po \nu|\mu\pf \leqslant \frac{1}{2\rho }\mathcal I(\nu|\mu)\,,\]
where the relative entropy and Fisher information are  respectively given by
\[\mathcal H(\nu|\mu) = \int_{\R^d} \ln\po \frac{\dd \nu}{\dd \mu}\pf \dd \nu\,,\qquad \mathcal I \po \nu|\mu\pf = \int_{\R^d} \left|\na \ln \frac{\dd \nu}{\dd \mu}\right|^2 \dd \nu\,.\]

Denoting $H(\nu) = \int_{\R^d} \nu \ln \nu$ the entropy of a measure $\nu\in\mathcal P_2(\R^d)$ (taken as $+\infty$ if $\nu$ doesn't have a Lebesgue density), we assume that the so-called free energy 
\[\mathcal F(\nu) = F(\nu)+H(\nu)\]
 is bounded from below and admits a global minimizer $m_\infty$ (which can typically be checked in practical cases using the La Salle invariance principle \cite{carrillo2023invariance}).

\subsection{Motivations and related works}\label{sec:motivations}

\paragraph{General bibliography.} The Gibbs measure~\eqref{eq:Gibbs} is the invariant measure of the Langevin process with potential $U_N$, which is the Markov diffusion $\bX_t$ on $\R^{dN}$ solving
\begin{equation}
\label{eq:Langevin}
\dd \bX_t = - \na U_N(\bX_t)\dd t + \sqrt{2} \dd \bm{B}_t\,,
\end{equation}
with $\bm{B}$ a $dN$-dimensional Brownian motion. The Poincaré and log-Sobolev inequalities give the convergence rate of this process toward equilibrium, respectively in the $\chi^2$ sense or in relative entropy (see e.g. Theorems~4.2.5 and 5.2.1 of \cite{BakryGentilLedoux}).  Among other things, they are also related to concentration properties of $m_*^N$, transport inequalities and estimates on the Poisson equation associated to~\eqref{eq:Langevin}; see \cite{BakryGentilLedoux,ane2000inegalites} for general considerations on these two inequalities.

 When $U_N$ is given by~\eqref{eq:UN}, the process \eqref{eq:Langevin} is a system of mean-field interacting particles. Under suitable conditions on $F$, propagation of chaos occurs, meaning that the empirical measure $\mu_{\bX_t}$ converges as $N\rightarrow \infty$ to the deterministic solution of a semi-linear PDE,  which is the Wasserstein gradient descent of the  free energy $\mathcal F$. We refer to \cite{ambrosio2005gradient} for details on Wasserstein gradient flows. As shown in \cite{Pavliotis}, a uniform-in-$N$ log-Sobolev inequality for $m^N_*$ implies a free energy/dissipation inequality (called non-linear log-Sobolev inequality in \cite{MonmarcheReygner})  for the non-linear limit flow which, with the vocabulary of gradient descent and optimization, corresponds to a Polyak-{\L}ojasiewicz inequality (it also implies non-linear versions of Talagrand transport inequalities). It implies that all the critical points of the free energy $\mathcal F$ are global minimizers, and a global convergence of this free energy to its global minimum along the flow. However in general the free energy may have other critical points than global minimizers \cite{Pavliotis,Tugautdoublewell,MonmarcheReygner}, in which case  the log-Sobolev constant of $m_*^N$ vanishes as $N\rightarrow \infty$. Then, moreover, the particle system is metastable and quantitative estimates on propation of chaos cannot be uniform in time, while they are under a uniform-in-$N$ log-Sobolev inequality for $m_*^N$, see \cite{Malrieu,M11,M33}. These questions, together with the local stability of  the non-linear flow, has recently been an active topic of research \cite{cormier2022stability,MonmarcheReygner,Tugautmonotone,Bashiri,bashiri2021metastability,carrillo2020long} (see references within for previous works).

Moreover, using modified entropies, a uniform log-Sobolev inequality for $m_*^N$ also gives uniform in $N$ long-time convergence rates and uniform in time propagation of chaos estimates for the kinetic Langevin process with energy $U_N$ and its mean-field limit, the Vlasov-Fokker-Planck equation \cite{M15,M11,MonmarcheReygner,Assadeck,Songbo2}, which is an important model of statistical physics and is also of interest for sampling, and also for the associated numerical schemes and non-linear Hamiltonian Monte Carlo \cite{bou2023nonlinear,M44,M47}.

Many tools are available for establishing functional inequalities such as Poincaré and log-Sobolev. However, apart in the independent case where these inequalities tensorize (see \cite{BakryGentilLedoux}) and in the log-concave case (thanks to the Bakry-Emery criterion \cite{BakryGentilLedoux} in the strongly-log-concave case, and  more generally for the Poincaré inequality \cite{cattiaux2020poincare}, in relation to the KLS conjecture \cite{chen2021almost,klartag2022bourgain,klartag2023logarithmic}), obtaining bounds which are independent from the dimension is usually challenging. For mean-field interacting systems, perturbation arguments \cite{AIDA1994448,holley1986logarithmic,CattiauxGuillin} or Lyapunov criteria \cite{cattiaux2009lyapunov,10.1214/11-AIHP447,10.1214/ECP.v13-1352,CATTIAUX20172361,BAKRY2008727} behave badly with $N$. 

For more details and general discussions on Poincaré and log-Sobolev inequalities in mean-field systems, we refer to \cite{Pavliotis,guillin2022uniform} and references within.

\paragraph{Specific recent works.} Particularly relevant to the present note, recently, motivated by high-dimensional statistics and machine learning, several works have focused on the case where $F$ is flat-convex, i.e. convex along linear interpolations:
\[F(t\mu + (1-t)\nu) \leqslant t F(\mu) + (1-t) F(\nu)\qquad \forall t\in[0,1],\ \mu,\nu\in\mathcal P_2(\R^d)\,. \]
It should be emphasized that this is completely different than saying that $U_N$ is convex (i.e. $m_*^N$ log-concave). For instance, if $F(\mu) = \int_{\R^d} V \dd \mu$ for some $V:\R^d\rightarrow \R$, then $\mu\mapsto F(\mu)$ is linear and thus always flat-convex, although $U_N(\bx) = \sum_{i=1}^N V(x_i)$ is convex if and only if $V$ is. In the flat-convex case, under suitable additional assumptions, a non-linear log-Sobolev inequality for the mean-field non-linear flow has been established in \cite{chizat,nitanda2022convex}, and an approximate convergence of the particle system toward the non-linear stationary solution is given in \cite{ChenRenWang,nitanda2024improved}.

The  very recent works~\cite{Songbo,Chewietal,kook2024sampling} prove a uniform-in-$N$ log-Sobolev inequality for $m_*^N$ in the flat-convex case (under suitable additional conditions), with two very different approaches. The works~\cite{Chewietal,kook2024sampling} are based on the reverse heat flow method of Kim and Milman~\cite{kim2012generalization}. They show that $m_*^N$ is the image of the standard Gaussian distribution on $\R^{dN}$ by a Lipschitz map (with a Lipschitz constant independent from $N$), which is stronger than the log-Sobolev inequality. On the other hand, this approach requires further assumptions than \cite{Songbo} and, related to our motivation in the present note, with this method,  the flat-convexity assumption may possibly be a rigid constraint. 

The rest of the present work builds upon~\cite{Songbo}. 

\subsection{Results}

To establish a Poincaré inequality, we work under the following set of conditions.

\begin{assu}\label{assu:Poincare}\

\begin{enumerate}[label=(\roman*)]
\item The partition function $Z_N = \int_{\R^{dN}} e^{-U_N}$ is finite, the free energy $\mathcal F$ is bounded and admits a minimizer $m_\infty$.\label{assu1i}
\item $F$ is $\lambda$-semi-convex for some $\lambda\geqslant 0$, in the sense that for all $\mu,\nu\in\mathcal P_2(\R^d)$ and $t\in[0,1]$,
\begin{equation}
\label{eq:courbure}
F(t\mu + (1-t)\nu) \leqslant t F(\mu) + (1-t) F(\nu) +  t(1-t) \frac{\lambda}2\mathcal W_2^2 (\nu,\mu)\,.
\end{equation}\label{assu1ii}
\item The flat derivatives  (as defined in \cite{carmona2018probabilistic})
\[\frac{\delta F}{\delta m}: \mathcal P_2(\R^d) \times \R^d \rightarrow \R\,,\qquad  \frac{\delta^2 F}{\delta^2 m} :\mathcal P_2(\R^d) \times \R^d \times \R^d \rightarrow \R\]
exist, are jointly continuous, and are $\mathcal C^2$ in the space variables. We denote
 \[D_m F(m,x) = \na_x \frac{\delta F}{\delta m}(m,x)\,,\qquad  D_m^2 F(m,x,x') = \na_{x,x'}^2 \frac{\delta^2 F}{\delta^2 m}(m,x,x')\,.\]\label{assu1iii}
 \item There exists $M_{mm}^F \geqslant 0$ such that for all $m\in \mathcal P_2(\R^d),x,x' \in\R^d$, 
 $|D_m^2 F(m,x,x')|\leqslant M_{mm}^F$ (where $|\cdot|$ stands for the operator norm with respect to the Euclidean norm).  \label{assu1iv}
 \item For $N\in\N^*$, there exists $\rho_N>0$ such that for all $\by \in\R^{d(N-1)}$, the conditional density of $X_1$ given $\bX_{\neq 1} =\by$ when $\bX \sim m_*^N$ (i.e. the density proportional to $x_1\mapsto e^{-N F(\mu_{\bx})}$ at fixed $\bx_{\neq 1}=\by$) satisfies a Poincaré inequality with constant $\rho_N$.  \label{assu1v}

\end{enumerate}
\end{assu}

We are interested in cases where, in item $\ref{assu1v}$, the conditional Poincaré inequality is in fact uniform in $N$, meaning that $\liminf \rho_N >0$ as $N\rightarrow \infty$.  Establishing this is often much simpler than obtaining a uniform-in-$N$ inequality for $m_*^N$ (the former may hold while the latter fails, as discussed e.g. in~\cite{MonmarcheReygner}), thanks to all the tools mentioned in  Section~\ref{sec:motivations} (e.g. perturbation results, Lyapunov conditions), since the conditional distribution of $X_1$ given $\bX_{\neq 1}$ is a probability measure on $\R^d$ (i.e. $N$ does not intervene in the dimension). See also Remark~\ref{eq:SongboAssume} below.

When $\inf \rho_N>0$, the conditions in Assumption~\ref{assu:Poincare} are the same as in S. Wang's work \cite{Songbo} for proving the Poincaré inequality, except that in the latter~\eqref{eq:courbure} holds with $\lambda=0$.

The following result is in the spirit of \cite[Theorem 1]{guillin2022uniform} (which is only concerned with pairwise interactions, corresponding to $F(\mu) = \int_{\R^d} V \dd \mu + \int_{\R^{2d}} W \dd \mu^{\otimes 2}$ for some potentials $V(x),W(x,y)$).

\begin{thm}\label{thm:Poincare}
Under Assumption~\ref{assu:Poincare}, $m_*^N$ satisfies a Poincaré inequality with constant $  \rho_N - \lambda - \frac{M_{mm}^F}{N}$, provided this is positive.
\end{thm}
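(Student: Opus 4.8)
The plan is to run the standard two-scale / conditional-variance decomposition used by S.\ Wang, tracking the places where flat-convexity ($\lambda=0$) is invoked and checking that a curvature defect of size $\lambda$ (plus a $1/N$ correction from the second flat derivative) can be absorbed into the conditional Poincaré constant $\rho_N$. Concretely, for $f\in\mathcal C_b^1(\R^{dN})$ I would write, using the tower property with respect to the variable $x_1$ and the exchangeability of $m_*^N$,
\[
\Var_{m_*^N}(f) \;=\; \mathbb E_{m_*^N}\bigl[\Var(f\mid \bX_{\neq 1})\bigr] \;+\; \Var_{m_*^N}\bigl(\bar f\bigr)\,,
\]
where $\bar f(\by) = \mathbb E[f(X_1,\by)\mid \bX_{\neq 1}=\by]$. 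The first term is controlled by Assumption~\ref{assu:Poincare}\ref{assu1v}: it is $\le \rho_N^{-1}\,\mathbb E_{m_*^N}|\na_{x_1} f|^2$. For the second term one iterates the argument on the remaining $N-1$ variables, so that the real work is to bound the ``drift'' of the conditional expectation $\bar f$, i.e.\ to control $|\na_{x_j}\bar f|$ for $j\ne 1$ in terms of $|\na_{x_j} f|$ plus a small perturbation. This is exactly where the conditional law of $X_1$ given $\bX_{\neq 1}$ enters, and where the flat-convexity of $F$ was used by Wang to get a contraction; I expect to replace it with a Lipschitz-type estimate that degrades by $\lambda/N$ per particle.

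The key computation is the differentiation of the conditional expectation. Writing $m_1(\dd x_1\mid\by)\propto e^{-NF(\mu_{(x_1,\by)})}\dd x_1$, one has
\[
\na_{x_j}\bar f(\by) \;=\; \mathbb E\bigl[\na_{x_j} f(X_1,\by)\,\big|\,\by\bigr] \;-\; \mathrm{Cov}\Bigl(f(X_1,\by),\, N\,\na_{x_j} F(\mu_{(X_1,\by)})\,\Big|\,\by\Bigr)\,,
\]
and $N\na_{x_j}F(\mu_{(x_1,\by)}) = D_mF(\mu_{(x_1,\by)},x_j)$, whose $x_1$-dependence is governed by $D_m^2 F$ with $|D_m^2F|\le M_{mm}^F$ and by the $\lambda$-semiconvexity~\eqref{eq:courbure}. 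The plan is: (i) use the conditional Poincaré inequality to bound the conditional covariance by $\rho_N^{-1}$ times the conditional Fisher-type quantities, (ii) split the contribution of $D_mF$ into a ``diagonal'' part proportional to $|\na_{x_j}f|$ and an ``off-diagonal'' part that, after summation over $j$, telescopes into a term of the form $\lambda\,\mathbb E|\na f|^2 + \frac{M_{mm}^F}{N}\mathbb E|\na f|^2$; and (iii) feed this back into the variance recursion. Summing the resulting geometric-type series over the $N$ conditioning steps produces the advertised effective constant $\rho_N - \lambda - M_{mm}^F/N$, valid as long as it is positive. The semiconvexity bound~\eqref{eq:courbure} is used precisely to turn the a priori sign-indefinite second-variation term into one bounded below by $-\lambda$ along the relevant interpolations.

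The main obstacle I anticipate is organizing the telescoping/summation step cleanly. In the $\lambda=0$ case the cross terms have a definite sign and simply drop or combine monotonically; with $\lambda>0$ one must keep careful track of the $N$ error terms of size $\lambda/N$ so that they sum to exactly $\lambda$ (not, say, $\lambda$ times a harmonic-type divergence), and simultaneously ensure the $M_{mm}^F$ term does not pick up an extra factor of $N$. This requires exploiting that each conditional measure lives on $\R^d$ (so $N$ does not enter the per-step constant $\rho_N$) and that $F$ depends on $\bx$ only through $\mu_{\bx}$, which makes $\na_{x_j}F = \frac1N D_mF(\mu_{\bx},x_j)$ carry the crucial $1/N$. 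A secondary technical point is justifying the differentiation under the integral sign and the integrations by parts, which follows from Assumption~\ref{assu:Poincare}\ref{assu1iii}–\ref{assu1iv} and standard density/truncation arguments; I would only sketch this.
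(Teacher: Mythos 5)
Your overall decomposition is not the one the paper uses, and more importantly it contains a step that I do not believe can be made to work as stated. The paper's proof goes through the integrated $\Gamma_2$ criterion: a Poincaré inequality with constant $\rho'$ is equivalent to $\rho'\int|\na f|^2\,\dd m_*^N\leqslant \int\Gamma_2(f)\,\dd m_*^N$ with $\Gamma_2(f)=|\na^2f|_{HS}^2+\na f\cdot\na^2U_N\na f$. The Hessian of $U_N$ splits into a diagonal part, handled by the conditional Poincaré inequality (in its own $\Gamma_2$ form) together with the $M_{mm}^F/N$ correction, and the full interaction quadratic form $\frac1N\sum_{i,j}\na_if\cdot D_m^2F(\mu_{\bx},x_i,x_j)\na_jf$, which is bounded below by $-\lambda|\na f|^2$ via Lemma~\ref{lem:Hessienne}. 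That lemma — the one genuinely new ingredient, obtained by a second-order expansion of $F$ along $\mu_{\bx\pm h\bv}$ in~\eqref{eq:courbure} — turns the flat semi-convexity into a \emph{spectral} lower bound $-\lambda N$ on the interaction Hessian as a quadratic form.

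The gap in your plan is step (ii). In the tower-property/covariance route, the off-diagonal interaction enters through conditional covariances of $f$ against $\frac1N D_mF(\mu_{\bx},x_j)$, whose dependence on $x_1$ is controlled only entrywise by $|D_m^2F|\leqslant M_{mm}^F$; once these covariances are estimated in absolute value and summed over the $N-1$ indices $j\neq 1$, one lands on a Zegarlinski/weak-coupling condition of the type $\rho_N>cM_{mm}^F$ — which is \cite[Equation (26)]{guillin2022uniform} — and not on $\rho_N>\lambda$. The semi-convexity~\eqref{eq:courbure} bounds only the quadratic form $\sum_{i,j}v_i\cdot D_m^2F(\mu_{\bx},x_i,x_j)v_j$ from below, not the individual blocks, and the cancellations responsible for that bound are destroyed the moment the covariances are taken in absolute value; there is no mechanism by which your error terms "telescope into $\lambda\,\mathbb E|\na f|^2$". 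In the flat-convex case $\lambda=0$ while $M_{mm}^F>0$, so the two conditions are genuinely different, and this is precisely the distinction the paper draws with \cite{guillin2022uniform}. To see $\lambda$ at all one needs a formulation in which the whole interaction Hessian is contracted against the single vector $\na f$, which is what the $\Gamma_2$ criterion provides. A secondary structural problem is that the marginal law of $\bX_{\neq 1}$ is not itself of mean-field Gibbs form, so the proposed iteration over the remaining $N-1$ variables does not close cleanly either.
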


The proof is given in Section~\ref{sec:Poincare}. Under Assumption~\ref{assu:Poincare}, the condition $\liminf \rho_N > \lambda$ to get a uniform in $N$ Poincaré inequality is sharp, as we discuss in Section~\ref{sec:sharpness}, and so is the Poincaré constant.

For the log-Sobolev inequality, we add the following conditions.

\begin{assu}\label{assu:LSI}\

\begin{enumerate}[label=(\roman*)]
\item There exists $\mathcal C:\mathcal P_2(\R^d)\times\mathcal P_2(\R^d) \rightarrow \R_+$ such that for all $\mu,\nu\in\mathcal P_2(\R^d)$ and $t\in[0,1]$,
\begin{equation}
\label{eq:courbure-cost}
F(t\mu + (1-t)\nu) \leqslant t F(\mu) + (1-t) F(\nu) +  t(1-t) \mathcal C (\nu,\mu)\,.
\end{equation}
Moreover, there exists $\lambda'\geqslant 0$ such that for all $N \in\N$ there exists $\alpha_N >0$ such that for all $m^N \in\mathcal P_2(\R^{dN})$,
\begin{equation}
\label{eq:cost-bound}
\int_{\R^{dN}} \mathcal C(\mu_{\bx},m_\infty) m^N(\dd \bx) \leqslant \frac{\lambda'}N \mathcal W_2^2 \po m^N,m_\infty^{\otimes N}\pf + \frac{\alpha_N}N\,.
\end{equation}
\label{assu2i}
\item There exists $\rho>0$ such that for all $m\in\mathcal P_2(\R^d)$, the probability measure   with density proportional to $\exp(-\frac{\delta F}{\delta_m}(m,\cdot))$ satisfies a LSI with constant $\rho$. \label{assu2ii}
\end{enumerate}

\end{assu} 

Of course, \eqref{eq:courbure} implies \eqref{eq:courbure-cost} with $\mathcal C = \frac{\lambda}{2}\mathcal W_2^2$, and then an inequality of the form~\eqref{eq:cost-bound} can be obtained using~\cite{FournierGuillin}. However, we will discuss in Section~\ref{sec:W2C} why  this choice cannot give uniform-in-$N$ at the end, making it necessary to work with other choices of $\mathcal C$. When~\eqref{eq:courbure-cost} holds with $\mathcal C=0$, then \eqref{eq:cost-bound} is trivial and thus we are back to the conditions considered in~\cite{Songbo}. We also refer to \cite{MonmarcheReygner} where~\eqref{eq:courbure-cost} for suitable $\mathcal C$ is used to establish (possibly local) non-linear log-Sobolev inequalities for the non-linear limit flow associated to $\mathcal F$ (see in particular \cite[Section 2.3]{MonmarcheReygner}  where it is proven that the semi-convexity condition~\eqref{eq:courbure} together with the uniform LSI of Assumption~\ref{assu:LSI}$\ref{assu2ii}$ with $4\lambda<\rho$ implies a global non-linear LSI).

\begin{rem}\label{eq:SongboAssume} Two comments about our assumptions:
\begin{itemize}
\item As discussed in \cite[Section 4]{Songbo}, under the additional condition that $\na_x \frac{\delta^2 F}{\delta^2 m}$ is bounded,  Assumption~\ref{assu:LSI}$\ref{assu2ii}$ implies that Assumptions~\ref{assu:Poincare}$\ref{assu1v}$  holds with $\rho_N \rightarrow \rho$ as $N\rightarrow \infty$.
\item Following~\cite{Songbo}, we will rely on a result from~\cite{ChenRenWang}. In the latter, it is assumed that $\mu \mapsto D_m F(\mu,y)$ is $M_{mm}^F$-Lipschitz with respect to the $\mathcal W_1$ Wasserstein distance for all $y\in\R^d$. This is equivalent to the bound on the second derivative, since
\begin{align*}
|D_m F(\mu,y) - D_m F(\nu,y)| &= \left|\int_0^1 \int_{\R^d} \na_y \frac{\delta^2 F}{\delta^2 m}\po (1-s)\mu + s\nu,y,y'\pf(\mu-\nu)(\dd y')\dd s\right|\\
& \leqslant   \int_0^1 \left\|  \na_{y,y'} \frac{\delta^2 F}{\delta^2 m}\po (1-s)\mu + s\nu,\cdot,\cdot\pf \right\|_{\infty} \mathcal W_1(\mu,\nu)  \dd s \\
& \leqslant M_{mm}^F \mathcal W_1(\mu,\nu)\,.
\end{align*}
The reverse implication is obtained by writing the Lipschitz condition with $\nu$ taken as the image of $\mu$ by an arbitrary small translation.
\end{itemize}

\end{rem}

For $\rho,\delta>0$, we say that a probability measure $\mu$ on $\R^d$ satisfies a defective LSI with constants $(\rho,\delta)$ if for all probability measure $\nu \ll \mu$ with $\frac{\dd \nu}{\dd \mu} \in \mathcal C_b^1$,
\[ 2\rho \mathcal H\po \nu|\mu\pf \leqslant  \mathcal I(\nu|\mu) + \delta\,.\]

A defective LSI together with a Poincaré inequality implies a usual (tight) LSI, see \cite[Lemma 5.1.4]{BakryGentilLedoux}. We state the defective inequality for $m_*^N$ separately as it is also of interest per se, as we discuss in Section~\ref{sec:exemple-defective}

\begin{thm}\label{thm:LSI}
Let $\varepsilon\in(0,1)$. Under Assumptions~\ref{assu:Poincare} and \ref{assu:LSI},  assume furthermore that $ 4\lambda'< \rho$.   Introduce the following notations:
\begin{align}
N_0 &= \frac{4 M_{mm}^F}{\rho - 4  \lambda'} \po 4 +  \frac{3M_{mm}^F (\varepsilon^{-1}-1)}{2\rho(1-\varepsilon)} \pf \nonumber\\ 
\tilde\lambda_N &=  \lambda' + \frac{M_{mm}^F}N \po 4 +  \frac{3M_{mm}^F (\varepsilon^{-1}-1)}{2\rho(1-\varepsilon)} \pf  \label{eq:lambdatilde}\\
\delta_N &= 4\rho(1-\varepsilon)\po 2 \alpha_N  + \frac{M_{mm}^F d}{\rho } \po \frac52 +  \frac{3M_{mm}^F (\varepsilon^{-1}-1)}{4\rho(1-\varepsilon)} \pf \pf \nonumber \\
\beta_N &=\frac{2\tilde \lambda_N}{\rho}\po  1 - \frac{2\tilde \lambda_N}{\rho}\pf ^{-1} \label{eq:betaN}\,. 
\end{align}
 Then, for all $N>N_0$, $\beta_N\in(0,1)$ and  $m_*^N$ satisfies a defective LSI with constants $(\rho_{N,*}',\delta_N)$ where
\[\rho_{N,*}' =   2 (1-\varepsilon) (1-\beta_N)  \rho \,.  \]
\end{thm}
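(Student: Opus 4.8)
The plan is to follow the comparison strategy of~\cite{Songbo}, in which one never works with $m_*^N$ directly but compares it with the product measure $m_\infty^{\otimes N}$. Since $m_\infty$ minimizes $\mathcal F$, it solves the fixed-point equation $m_\infty\propto\exp(-\frac{\delta F}{\delta m}(m_\infty,\cdot))$, so Assumption~\ref{assu:LSI}\ref{assu2ii} taken at $m=m_\infty$ says exactly that $m_\infty$ satisfies an LSI with constant $\rho$; by tensorization so does $m_\infty^{\otimes N}$, and hence (Otto--Villani) also the quadratic transport inequality $\mathcal W_2^2(\cdot,m_\infty^{\otimes N})\leqslant\frac2\rho\mathcal H(\cdot|m_\infty^{\otimes N})$. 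The fixed-point equation gives the pointwise identity $\ln\frac{\dd m_*^N}{\dd m_\infty^{\otimes N}}(\bx)=-N\Phi(\mu_{\bx})+\mathrm{cst}_N$ with the flat Taylor remainder
\[ \Phi(\mu)\ :=\ F(\mu)-F(m_\infty)-\int_{\R^d}\frac{\delta F}{\delta m}(m_\infty,y)\,(\mu-m_\infty)(\dd y)\,, \]
whence, for every admissible $\nu$,
\[ \mathcal H(\nu|m_*^N)\ =\ \mathcal H(\nu|m_\infty^{\otimes N})\ -\ \mathcal H(m_*^N|m_\infty^{\otimes N})\ +\ N\int_{\R^{dN}}\Phi(\mu_{\bx})\,(\nu-m_*^N)(\dd\bx)\,. \]

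First I would treat the three terms on the right. The term $\mathcal H(\nu|m_\infty^{\otimes N})$ is $\leqslant\frac1{2\rho}\mathcal I(\nu|m_\infty^{\otimes N})$, and I trade this Fisher information for $\mathcal I(\nu|m_*^N)$: from $\na_{x_i}\ln\frac{\dd m_*^N}{\dd m_\infty^{\otimes N}}=-\big(D_mF(\mu_{\bx},x_i)-D_mF(m_\infty,x_i)\big)$ together with the $\mathcal W_1$-Lipschitz bound of Remark~\ref{eq:SongboAssume} one gets $|\na\ln\frac{\dd m_*^N}{\dd m_\infty^{\otimes N}}|^2\leqslant N(M_{mm}^F)^2\mathcal W_2^2(\mu_{\bx},m_\infty)$, so a Young splitting with a weight tuned by $\varepsilon$ yields $\mathcal I(\nu|m_\infty^{\otimes N})\leqslant(1-\varepsilon)^{-1}\mathcal I(\nu|m_*^N)+(\text{weight})\,N(M_{mm}^F)^2\int\mathcal W_2^2(\mu_{\bx},m_\infty)\,\nu(\dd\bx)$. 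The term $-N\int\Phi\,\dd m_*^N$ is the one where~\cite{Songbo} simply drops a nonnegative quantity, flat-convexity ensuring $\Phi\geqslant0$; here I only have $\Phi\geqslant-\mathcal C(m_\infty,\cdot)$ from~\eqref{eq:courbure-cost}, so $-N\int\Phi\,\dd m_*^N\leqslant N\int\mathcal C(m_\infty,\mu_{\bx})\,m_*^N(\dd\bx)\leqslant\lambda'\mathcal W_2^2(m_*^N,m_\infty^{\otimes N})+\alpha_N$ by~\eqref{eq:cost-bound}, and Talagrand turns $\lambda'\mathcal W_2^2(m_*^N,m_\infty^{\otimes N})$ into $\frac{2\lambda'}\rho\mathcal H(m_*^N|m_\infty^{\otimes N})$; since $4\lambda'<\rho$ this is absorbed by the negative term $-\mathcal H(m_*^N|m_\infty^{\otimes N})$, leaving only the additive constant $\alpha_N$. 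The last term $N\int\Phi\,\dd\nu$ is the delicate one: Taylor-expanding $\Phi$ and $\na\Phi$, centering the second flat derivative and using the operator bound $M_{mm}^F$ on $D_m^2F$, and invoking~\eqref{eq:courbure-cost}--\eqref{eq:cost-bound} where flat-convexity would have been used, one arrives at a bound of the form $\lambda'\mathcal W_2^2(\nu,m_\infty^{\otimes N})$, plus $O(M_{mm}^F/N)$-size quadratic-transport terms, plus a dimension-dependent defect $\propto M_{mm}^F d$ (coming, via the Poincaré bound $\Var(m_\infty)\leqslant d/\rho$, from the diagonal / self-interaction contribution), plus a further $\alpha_N$.

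Next I would close the loop. All residual $\mathcal W_2^2(\cdot,m_\infty^{\otimes N})$ terms are sent back to relative entropies by Talagrand, and $\mathcal H(\cdot|m_\infty^{\otimes N})$ is traded back for $\mathcal H(\cdot|m_*^N)$ through the identity above; this produces a self-improving inequality of the schematic form $\big(1-\tfrac{2\tilde\lambda_N}\rho\big)\,\mathcal H(\nu|m_*^N)\leqslant\tfrac1{2\rho(1-\varepsilon)}\mathcal I(\nu|m_*^N)+(\text{defect})$ with $\tilde\lambda_N$ as in~\eqref{eq:lambdatilde}. This can be solved for $\mathcal H(\nu|m_*^N)$ exactly when $\tfrac{2\tilde\lambda_N}\rho<\tfrac12$, which under the standing hypothesis $4\lambda'<\rho$ is precisely $N>N_0$; the rearrangement brings in the factor $1/(1-\tfrac{2\tilde\lambda_N}\rho)=1+\beta_N$ with $\beta_N$ as in~\eqref{eq:betaN}, and after a bookkeeping of the $\varepsilon$-dependent constants one reads off the defective LSI with the stated $(\rho'_{N,*},\delta_N)$. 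The conditional Poincaré inequality of Assumption~\ref{assu:Poincare}\ref{assu1v} enters either to upgrade a restricted inequality produced by the comparison argument to a genuine defective LSI, in the spirit of the Lyapunov approach, or, following~\cite{Songbo} closely, through the identification $\rho_N\to\rho$ recalled in Remark~\ref{eq:SongboAssume}.

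I expect the main obstacle to be the uniformity in $N$ of $\tilde\lambda_N$ and $\delta_N$, and specifically the estimate of $N\int\Phi\,\dd\nu$: this is exactly where the naive choice $\mathcal C=\tfrac\lambda2\mathcal W_2^2$ breaks down, since then~\eqref{eq:cost-bound} forces $\alpha_N$ to grow like the Fournier--Guillin rate $N\,\mathbb E_{m_\infty^{\otimes N}}[\mathcal W_2^2(\mu_{\bY},m_\infty)]$ (cf.\ Section~\ref{sec:W2C}), so the whole point is to keep the abstract cost $\mathcal C$ and~\eqref{eq:cost-bound}, which is what buys the "room for a bit of concavity". The remaining work is the careful tracking of the Young parameters and of the $1/N$ factors, which is what pins down the precise expressions for $N_0$, $\tilde\lambda_N$, $\delta_N$ and $\beta_N$.
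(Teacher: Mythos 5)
Your closing arithmetic (Talagrand, absorption under $4\lambda'<\rho$, the factor $(1-2\tilde\lambda_N/\rho)^{-1}=1+\beta_N$) matches the paper's, and your identity $\ln\frac{\dd m_*^N}{\dd m_\infty^{\otimes N}}(\bx)=-N\Phi(\mu_{\bx})+\mathrm{cst}$ is correct. But the central step of your plan --- the upper bound on $N\int\Phi\,\dd\nu$ by $\lambda'\mathcal W_2^2(\nu,m_\infty^{\otimes N})$ plus $O(M_{mm}^F/N)$ transport terms plus an $N$-uniform defect --- does not go through, and this is a genuine gap rather than bookkeeping. The function $\Phi$ is the second-order flat Taylor remainder of $F$ at $m_\infty$; the hypotheses give only the \emph{lower} bound $\Phi\geqslant-\mathcal C(\cdot,m_\infty)$ (this is \eqref{eq:pent1}), while the only available \emph{upper} bound is $\Phi(\mu)\leqslant\frac{M_{mm}^F}{2}\mathcal W_1^2(\mu,m_\infty)$ from Assumption~\ref{assu:Poincare}\ref{assu1iv}. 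Hence $N\int\Phi(\mu_{\bx})\,\nu(\dd\bx)$ is controlled only by $\frac{NM_{mm}^F}{2}\,\mathbb E_\nu\big[\mathcal W_2^2(\mu_{\bX},m_\infty)\big]$, which by the computation of Section~\ref{sec:W2C} is of order $\frac{M_{mm}^F}{2}\mathcal W_2^2(\nu,m_\infty^{\otimes N})$ \emph{plus an irreducible additive term of order $N^{1-2/d}$} (Fournier--Guillin). The coefficient is $M_{mm}^F$, not $\lambda'$, so after Talagrand it cannot be absorbed under the stated hypothesis $4\lambda'<\rho$ alone (you would need smallness of $M_{mm}^F/\rho$, which the theorem does not assume), and the $N^{1-2/d}$ term destroys the uniform boundedness of $\delta_N$. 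The abstract cost $\mathcal C$ cannot rescue this step because it only controls $\Phi$ from below. The same issue affects your Young splitting of $\mathcal I(\nu|m_\infty^{\otimes N})$, which produces $\varepsilon^{-1}N(M_{mm}^F)^2\,\mathbb E_\nu[\mathcal W_2^2(\mu_{\bX},m_\infty)]$ with a non-vanishing weight.

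The paper avoids your global decomposition entirely. It imports from \cite{ChenRenWang} the inequality \eqref{eq:LSI1}, a lower bound on $\mathcal I(m^N|m_*^N)$ obtained by a particle-by-particle conditional argument: the local LSI of Assumption~\ref{assu:LSI}\ref{assu2ii} is applied to the conditional law of each coordinate, with two changes of measure whose errors $\Delta_1,\Delta_2$ scale like $\frac1N\mathcal W_2^2(m^N,m_\infty^{\otimes N})+d/\rho$ --- a factor $N$ better than any comparison of the full measures $m_*^N$ and $m_\infty^{\otimes N}$. Crucially, the linear term appearing in \eqref{eq:LSI1} is $N\,\mathbb E\int\frac{\delta F}{\delta m}(\mu_{\bx},\cdot)(\mu_{\bx}-m_\infty)$, i.e.\ the flat derivative is evaluated at the empirical measure rather than at $m_\infty$, so the convexity defect enters only through the one-sided inequalities \eqref{eq:pent1}--\eqref{eq:pent2} (yielding \eqref{eq:entropyN}--\eqref{eq:entropyN2}), where the error is exactly $\mathcal C$ and hence controlled by \eqref{eq:cost-bound}. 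To complete a proof you must either reproduce that conditional argument or quote \eqref{eq:LSI1} as a black box; the tangent-remainder decomposition around $m_\infty$ cannot yield the stated constants.
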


This result is proven in Section~\ref{sec:LSI}. Combining it with the Poincaré inequality of Theorem~\ref{thm:Poincare} gives an LSI (we use \cite[Proposition 5]{Songbo}  for the explicit constant in the following result).

\begin{cor}\label{cor:LSI}
Under the settings and with the notations of Theorem~\ref{thm:LSI}, for all $N>N_0$ such that moreover  $ \rho_N - \lambda - \frac{M_{mm}^F}{N}>0$,  $m_*^N$ satisfies a log-Sobolev inequality with constant
\[\rho_{N,*} =   \rho_{N,*}'\po 1+\frac{\delta_N}{4\po \rho_N - \lambda - \frac{M_{mm}^F}{N}\pf }\pf^{-1}   \,.  \]
In particular, assuming moreover that $\liminf\rho_N > \lambda$, there exists a constant $\eta >0$ such that  $\rho_{N,*}\geqslant \eta/(1+\alpha_N)$ for all $N$ large enough.
\end{cor}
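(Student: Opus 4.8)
The statement follows by combining Theorems~\ref{thm:Poincare} and~\ref{thm:LSI} via the classical fact that a defective log-Sobolev inequality together with a Poincaré inequality self-improves into a tight log-Sobolev inequality. Precisely, the plan is to invoke the quantitative tightening result recorded in \cite[Proposition 5]{Songbo} (a sharpened form of \cite[Lemma 5.1.4]{BakryGentilLedoux}): if a probability measure $\mu$ on $\R^d$ satisfies a defective LSI with constants $(\rho',\delta)$ and a Poincaré inequality with constant $\rho_P>0$, then it satisfies a (tight) LSI with constant $\rho'(1+\delta/(4\rho_P))^{-1}$.

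So first I would fix $\varepsilon\in(0,1)$ and $N>N_0$ so that Theorem~\ref{thm:LSI} applies, producing a defective LSI for $m_*^N$ with constants $(\rho_{N,*}',\delta_N)$; note $\rho_{N,*}'=2(1-\varepsilon)(1-\beta_N)\rho>0$ because $\beta_N\in(0,1)$. Under the extra hypothesis $\rho_N-\lambda-M_{mm}^F/N>0$, Theorem~\ref{thm:Poincare} gives a Poincaré inequality for $m_*^N$ with constant $\rho_P:=\rho_N-\lambda-M_{mm}^F/N$. Feeding $(\rho',\delta,\rho_P)=(\rho_{N,*}',\delta_N,\rho_N-\lambda-M_{mm}^F/N)$ into the tightening statement yields exactly $\rho_{N,*}$ as written.

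For the ``in particular'' claim I would simply track the asymptotics in $N$ and $\alpha_N$. Since $\liminf_N \rho_N>\lambda$, there is $c>0$ with $\rho_N-\lambda-M_{mm}^F/N\geqslant c$ for all $N$ large. From \eqref{eq:lambdatilde} one has $\tilde\lambda_N\to\lambda'$, hence $\beta_N\to(2\lambda'/\rho)/(1-2\lambda'/\rho)$, which lies in $(0,1)$ thanks to $4\lambda'<\rho$; therefore $\rho_{N,*}'$ is bounded below by a positive constant for $N$ large. Meanwhile, inspecting the formula for $\delta_N$ shows it is an affine function of $\alpha_N$ whose coefficients depend only on $\rho,\varepsilon,M_{mm}^F,d$; combined with $\rho_P\geqslant c$ this gives $1+\delta_N/(4(\rho_N-\lambda-M_{mm}^F/N))\leqslant C(1+\alpha_N)$ for some constant $C$ and all $N$ large. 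Hence $\rho_{N,*}\geqslant \eta/(1+\alpha_N)$ with $\eta:=(\inf_{N\text{ large}}\rho_{N,*}')/C>0$.

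The combination itself is routine; the only points deserving attention are that $1-\beta_N$ (equivalently the gap $\rho/4-\tilde\lambda_N$, which is positive for $N>N_0$) stays bounded away from $0$ as $N\to\infty$, which is exactly what $4\lambda'<\rho$ ensures, and that the at-most-linear growth of the defective constant $\delta_N$ in $\alpha_N$ is precisely what caps the final rate at $1/(1+\alpha_N)$; in particular, whenever $\alpha_N$ stays bounded in $N$ — e.g. $\mathcal C=0$, or the choices of $\mathcal C$ discussed in Section~\ref{sec:W2C} — this yields a genuinely uniform-in-$N$ log-Sobolev constant for $m_*^N$.
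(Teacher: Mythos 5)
Your proof is correct and follows exactly the route the paper takes (which it states only in one line before the corollary): apply the quantitative tightening of the defective LSI by the Poincaré inequality via \cite[Proposition 5]{Songbo}, then note that $\rho_{N,*}'$ stays bounded below (since $4\lambda'<\rho$ keeps $\beta_N$ away from $1$) while $\delta_N$ grows at most affinely in $\alpha_N$ and $\rho_N-\lambda-M_{mm}^F/N$ is bounded below when $\liminf\rho_N>\lambda$. Nothing is missing.
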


In cases where $\alpha_N$ and $1/\rho_N$ are uniformly bounded in $N$, this gives a uniform-in-$N$ LSI inequality provided $\lambda,\lambda'$ are not too large. See Section~\ref{sec:examplequimarche} for such examples. In these cases, this result should be compared to \cite[Theorem 8]{guillin2022uniform}, which also establishes uniform LSI under conditions that allows for non flat convex energies. In this work, the analogue of our restrictions on $\lambda,\lambda'$  is Zegarlinski's condition, \cite[Equation~(26)]{guillin2022uniform}. First, the work of Guillin et al is restricted to pairwise interactions, i.e. $F(\mu) = \int_{\R^d} V \dd \mu + \int_{\R^{2d}} W \mu^{\otimes 2}$. More importantly, the condition \cite[Equation~(26)]{guillin2022uniform} imposes a uniform bound on the coupling between particles and is not able to take into account that some parts of the energy may be flat-convex, contrary to our result. In other words, we treat the flat-convex cases of \cite{Songbo} and the weakly coupled cases of \cite{guillin2022uniform} in a single framework, which enables to treat situations which mixes the two cases, as in Section~\ref{sec:particularexemple} below. It is not clear that these situations could be treated as a small perturbation of the convex case or as a convex perturbation of the weakly coupled case.

\subsection{Discussion and examples}\label{sec:exemple}

\subsubsection{Sharpness of the restriction on $\lambda$ for Poincaré}\label{sec:sharpness}

Theorem~\ref{thm:Poincare} yields a uniform-in-$N$ Poincaré inequality provided $\liminf_{N\rightarrow \infty} \rho_N > \lambda$. This condition  is sharp. Indeed, consider, as in \cite[Remark 3]{guillin2022uniform}, the case where $d=1$,
\begin{equation}
\label{eq:Fmuquadra}
F(\mu) = \frac12 \int_{\R}x^2 \mu(\dd x) -  \frac{a}2 \po \int_{\R} x \mu(\dd x)\pf ^2 \,,
\end{equation} 
for $a>0$. For $x,y\in\R$,
\[F\po (1-t)\delta_x+ t \delta_y\pf - (1-t) F(\delta_x) - t F(\delta_y) = \frac{a}2 t(1-t) |x-y|^2 = \frac{a}2 t(1-t)  \mathcal W_2^2(\delta_x,\delta_y)\,. \]
More generally, for any $\nu,\mu \in \mathcal P_2(\R)$, 
\[ F(t\mu + (1-t)\nu) -  t F(\mu) + (1-t) F(\nu)  = \frac{a}2 t(1-t) \po \int_{\R} x (\mu-\nu)(\dd x)\pf^2 \leqslant \frac{a}2t(1-t) \mathcal W_2^2(\nu,\mu)\,.\] 
In other words, \eqref{eq:courbure} holds with the optimal constant $\lambda=a$. Moreover, $\frac{\delta^2 F}{\delta^2 m}(m,x,x')=a xx'$ for all $m\in\mathcal P(\R)$, $x,x'\in\R$, from which $D_m^2 F=a$, which gives Assumption~\ref{assu:Poincare}$\ref{assu1iv}$. Finally, for $N>a$, for fixed $(x_j)_{j\in\cco 1,N\ccf}$, 
\[x_1 \mapsto e^{-N F(\mu_{\bx})} \propto \exp\po - \frac{1}{2} \po 1-\frac{a}{N}\pf  \po x_1 - \frac{a}{N-a}\sum_{j=2}^N x_j\pf^2   \pf\,. \]
This is the density of a Gaussian distribution with variance $\po 1-\frac{a}{N}\pf^{-1}$, whose optimal log-Sobolev constant is thus  $  1-\frac{a}{N}$. This means that Assumption~\ref{assu:Poincare}$\ref{assu1v}$ holds, with $\rho_N = 1 -\frac{a}{N}$ (which is optimal).

As a summary, in this case, with the optimal constants, $\rho_N - \lambda - \frac{M_{mm}^F}{N} = 1 -a \po 1+ \frac{2}{N}\pf $. On the other hand,
\[U_N(\bx) = \frac12 |\bx|^2 - \frac{a}{2N} \po \sum_{i=1}^N x_i\pf^2 \]
is a quadratic form which is not definite positive as soon as $a\geqslant 1$ (taking for instance $x_1=\dots=x_N$), which means that $m_*^N$ is not even defined in that case.  Moreover, when $a<1$, the variance of $m_*^N$ blows up as $a\rightarrow 1$, and thus its Poincaré constant goes to $0$.  More precisely, taking again $x_1=\dots=x_N$, we see that $1-a$ is an eigenvalue of $\na^2 U_N$, and thus the optimal Poincaré inequality of $m_N^*$ is smaller than $1-a$. In other words, in this simple case, Theorem~\ref{thm:Poincare} gives the optimal constant up to the vanishing term $2a/N$.

\subsubsection{Using only the semi-convexity}\label{sec:W2C}

 Assumption~\ref{assu:Poincare} implies \eqref{eq:courbure-cost} with $\mathcal C(\nu,\mu)= \frac{\lambda}{2}\mathcal W_2^2(\nu,\mu)$. Moreover, considering $(\bX,\bY)$ an optimal $\mathcal W_2$ coupling of $m^N$ and $m_\infty^{\otimes N}$, for any $\varepsilon>0$,
  \begin{align*}
 \mathbb E \po \mathcal W_2^2(\delta_{\bX},m_\infty)\pf &\leqslant (1+\varepsilon) \mathbb E \po \mathcal W_2^2(\delta_{\bX},\delta_{\bY})\pf + (1+\varepsilon^{-1})\mathbb E \po \mathcal W_2^2(\delta_{\bY},m_\infty)\pf \\
 &= (1+\varepsilon) \frac1N   \mathcal W_2^2\po m^N,m_\infty^{\otimes N}\pf + (1+\varepsilon^{-1})\mathbb E \po \mathcal W_2^2(\delta_{\bY},m_\infty)\pf\,.
  \end{align*}
Thanks to \cite[Theorem 1]{FournierGuillin}, the last term is of order $N^{-2/d}$ for $d>2$ (and this is optimal for measures with a density with respect to Lebesgue). In other words,  under   Assumption~\ref{assu:Poincare}, \eqref{eq:cost-bound} holds for $\mathcal C = \frac{\lambda}{2}\mathcal W_2^2$ with $\lambda'=(1+\varepsilon) \lambda$ that can be taken arbitrarily close to $\lambda$ and $\alpha_N = C_\varepsilon N^{1-2/d}$ for some $C_\varepsilon>0$. Applying Corollary~\ref{cor:LSI} then gives a log-Sobolev constant of order $N^{-1+2/d}$ (provided $4\lambda <\rho$ and $\liminf\rho_N >\lambda$). We cannot get better with our approach if we only use~\eqref{eq:courbure}, which is why in Assumption~\ref{assu:LSI} we allow some flexibility in the choice of $\mathcal C$. Indeed, as discussed in Section~\ref{sec:examplequimarche} below, there are cases of interest where Assumption~\ref{assu:LSI} holds with $\alpha_N$ uniformly bounded in $N$, so that Corollary~\ref{cor:LSI} gives a log-Sobolev constant uniform in $N$. 

That being said, when $\alpha_N$ is of order $N^{1-2/d}$, the defective LSI of Theorem~\ref{thm:LSI} may still be of interest, see Section~\ref{sec:exemple-defective}. 

\subsubsection{Uniform LSI for some parametrized energies}\label{sec:examplequimarche}

Assume that, for some Hilbert space $\mathbb H$ with norm $\|\cdot\|$,
\[F(\mu) = F_0(\mu) + R \po \int_{\R^d} \varphi \dd \mu\pf \,,\]
where $F_0$ is flat-convex, $\varphi:\R^d \rightarrow \mathbb H$ and $R:\mathbb H \rightarrow \R$. For examples and motivations for this form of energies, see \cite{MonmarcheReygner,chizat2018global} (besides, this is the case of~\eqref{eq:Fmuquadra} for instance). Assume that $R$ is $2\alpha$-semi-convex for some $\alpha\geqslant 0$, meaning that $R+\alpha\|\cdot\|^2$ is convex. Together with the flat-convexity of $F_0$, and since $\mu \mapsto \int_{\R^d} \varphi \dd \mu$ is linear, it is readily checked that this implies~\eqref{eq:courbure-cost} with
\[\mathcal C(\nu,\mu) = \alpha \left\|\int_{\R^d} \varphi \dd (\nu -\mu)\right\|^2\,. \]
Then, considering an optimal $\mathcal W_2$ coupling $\bX,\bY$ of $m^N$ and $m_\infty^{\otimes N}$, for any $\varepsilon\in(0,1)$,
\begin{align*}
\lefteqn{\mathbb E \po \mathcal C(\pi_{\bX},m_\infty)\pf}\\
  &\leqslant \alpha(1+\varepsilon)  \mathbb E \po \left\|\frac1N \sum_{i=1}^N \co \varphi(X_i) - \varphi(Y_i)\cf \right\|^2\pf + \alpha(1+\varepsilon^{-1}) \mathbb E \po \left\|\frac1N \sum_{i=1}^N \varphi(Y_i) - \int_{\R^d} \varphi \dd m_\infty \right\|^2\pf  \\
& \leqslant  \frac{\alpha (1+\varepsilon)  \|\na \varphi\|^2}{N }\mathcal W_2^2  \po m^N,m_\infty^{\otimes N}\pf  + \alpha (1+\varepsilon^{-1})   \frac{\Var_{m_\infty}(\varphi )}{N}\,,
\end{align*}
provided $\varphi$ is Lipschitz (and thus has finite variance with respect to $m_\infty$ when the latter satisfies a Poincaré inequality). This is exactly~\eqref{eq:cost-bound} with $\alpha_N = \alpha(1+\varepsilon^{-1})  \Var_{m_\infty}(\varphi ) $ for all $N\in\N$.

To check the other conditions of Corollary~\ref{cor:LSI}, we see that
\[D_m^2 F(\mu,x,x') = \na_x \varphi(x') \cdot \na^2 R\po \int_{\R^d} \varphi\dd \mu\pf \na_x \varphi(x) \,.\]
Since we already assumed that $\varphi$ is Lipschitz continuous, if we assume that $R$ is $\mathcal C^2$ with $\na^2 R$  bounded over the convex hull of $\{\varphi(x),\ x\in\R^d\}$ then we get Assumption~\ref{assu:Poincare}\ref{assu1iv}.

We may consider many conditions to ensure the conditional and local LSI. To keep this simple in this general framework, let us assume for instance that $\na R$ is also bounded over the convex hull of $\{\varphi(x),\ x\in\R^d\}$. In that case, since
\begin{align*}
\na_{x_1} NF(\mu_{\bx}) &= \na V(x_1) + \na R \po \frac1N\sum_{i=1}^N \varphi(x_i)\pf \cdot \na\varphi(x_1)\\
 \na_{x}\frac{\delta F}{\delta m}(\mu,x) &= \na V(x) +  \na R\po \int_{\R^d}\varphi\dd \mu\pf \cdot \na \varphi(x)\,,
\end{align*}
we get that $x_1\mapsto \exp(-NF(\mu_{\bx}))$ and $\exp(-\frac{\delta F}{\delta m}(\mu,\cdot))$ are Lipschitz perturbations of $e^{-V}$. Assuming that the latter satisfies a LSI, we get Assumptions~\ref{assu:Poincare}\ref{assu1v} (with a constant $\rho_N$) and \ref{assu:LSI}\ref{assu2ii}  thanks to \cite{AIDA1994448} (or  \cite{CattiauxGuillin} for explicit constants). 

As a conclusion, under the previous assumptions, we get a uniform-in-$N$ LSI for $m_*^N$ provided $\alpha$ is small enough.

\subsubsection{Application to a particular example}\label{sec:particularexemple}

In many models of pairwise interacting particles, the interaction is repulsive at short-range and attractive in long-range. This is the case for instance for swarming/flocking models   \cite{reynolds1987flocks}, cell adhesion \cite{flandoli2020macroscopic} or  molecular dynamics \cite{lelievre2016partial} with e.g. Lennard-Jones particles (not covered here as they are singular). Here we consider a simple example of the form
\[F(\mu) = \int_{\R^d} V \dd \mu + \frac12\int_{\R^{2d}} W(x-y) \mu(\dd x)\mu(\dd y)\,,\]
where  $W=W_1+W_2$, 
\[W_1(x-y) = L e^{-|x-y|^2}, \qquad W_2(x-y):= \alpha |x-y|^2\]
for some $L,\alpha>0$. When $L>\alpha$ we get repulsion at small distances and attraction otherwise.  In order to get a uniform-in-$N$ result, we can use \cite[Theorem 8]{guillin2022uniform}, but only if both $L$ and $\alpha$ are small enough (depending on $V$), or \cite[Theorem 1]{Songbo} for any $L>0$ if $\alpha=0$. By contrast, with Corollary~\ref{cor:LSI}, we can consider any $L>0$ and then some small $\alpha>0$ (the constraint on $\alpha$ does depend on $L$), as we detail below. This is interesting in cases such as mollified versions of Lennard-Jones-type particles, where the short-range repulsion is much stronger than the long-range attraction.

\begin{itemize}
\item \emph{Curvature condition.} By expanding $e^{2x\cdot y}$, as detailed in \cite[Exemple 1]{MonmarcheReygner},
\[F_1(\mu):= \frac{L}2\int_{\R^{2d}} e^{-|x-y|^2}  \mu(\dd x)\mu(\dd y) = \frac{L}2 \sum_{k=1}^\infty \po \int_{\R^d} w_k \dd \mu\pf^2 \]
with $w_k(x) =  e^{-|x|^2}\prod_{i=1}^d \frac{2^{k_i/2}x_i^{k_i}}{\sqrt{k_i!}} $. It is then clear that $F_1$ is flat-convex (the potential $W_1$ is said to be a Mercer kernel), and that~\eqref{eq:courbure-cost} holds with
\[\mathcal C(\mu,\nu) = \alpha |x-y|^2\,.\]
As shown in Section~\ref{sec:examplequimarche}  (with, in the present case, $\varphi(x)=x$), \eqref{eq:cost-bound} holds with a constant $\alpha_N$ and $\lambda'=\alpha(1+\varepsilon)$ for an arbitrary $\varepsilon\in(0,1)$.

\item \emph{Lipschitz condition.} The second flat derivative here is
\[\frac{\delta^2 F }{\delta^2 m}(\mu,x,x') = W(x-x')\,,\qquad D_m^2 F = 2L \po I_d-2|x-y|^{\otimes 2} \pf e^{-|x-y|^2} - 2\alpha I_d\]
from which $\|D_m^2 F\|_\infty \leqslant 2L(1+2e^{-1}) + 2\alpha =: M_{mm}^F$ (as $t\mapsto e^{-t}$ is maximal at $t=1$).

\item \emph{Local inequalities.} Assuming that $V=V_1+V_2$ where $V_1$ is bounded and $V_2$ is $\eta$-strongly convex for some $\eta>0$, for any measure $\mu$,
\[\frac{\delta F}{\delta m}(\mu,\cdot) = V + W\star \mu = V_1 + W_1\star \mu + V_2 + W_2\star\mu\,, \]
where $V_1+W_1\star\mu$ is bounded by $\|V_1\|_\infty + L$ and $V_2+W_2\star\mu$ is $\eta$-strongly convex. The Bakry-Emery curvature criterion and Holley-Stroock lemma (see \cite{BakryGentilLedoux}) give Assumption~\ref{assu:LSI}\ref{assu2ii} with $\rho = \eta e^{-\|V_1\|_\infty - L} $. Besides, for fixed $\bx_{\neq 1}$ and $N$, there exists $z,C\in\R$ such that 
\[N F(\mu_{\bx}) = V(x_1) + \alpha| x_1 - z|^2 + \frac{L}{2N} + \frac1N \sum_{j=2}^N W(x_1-x_j) +  C\,,\]
which is again, up to a constant, the sum of two potentials, one $\eta$-strongly potential, and one bounded by $\|V_1\|_\infty + L$. As a consequence, Assumption~\ref{assu:Poincare}\ref{assu1v} holds with $\rho_N =  \eta e^{-\|V_1\|_\infty - L}$.
\end{itemize}

As a conclusion, under the assumptions introduce above, Corollary~\ref{cor:LSI} applies and gives a uniform in $N$ LSI for $m_*^N \propto \exp(-U_N)$ as soon as 
\[4 \alpha < \eta e^{-\|V_1\|_\infty - L}\,.\]
Interested by the high temperature regime, we can introduce a parameter $\beta>0$ and consider the Gibbs measure $m_\beta^N \propto \exp(-\beta U_N)$, which simply consists in multiplying $F$ by $\beta F$. The constants $\alpha,L,\|V_1\|_\infty$ and $\eta$ are then all multiplied by $\beta$ and thus, assuming $4\alpha<\eta$, we obtain that $m_\beta^N $ satisfies a uniform in $N$ LSI as soon as 
\[ \beta  < \frac{\ln\eta - \ln (4 \alpha) }{\|V_1\|_\infty + L}  \,. \] 
Notice that, in order to get a uniform-in-$N$ LSI at some temperature in these settings, the condition $\eta>\alpha$ is necessary
since we are back to the case discussed in Section~\ref{sec:sharpness} when taking $L=0$, $V(x)= \frac{\eta}{2}|x|^2$.

\subsubsection{Some consequences of the defective log-Sobolev inequality}\label{sec:exemple-defective} 

Denote by $P_t^Nf(\bx) = \mathbb E_{\bx}\po f(\bX_t)\pf$ the semi-group associated with~\eqref{eq:Langevin} for nice $f:\R^{dN} \rightarrow \R$. This dynamics being reversible with respect to $m_*^N$, for an initial distribution $\bX_0 \sim \nu_0^N$ with a density $h_0 \in \mathcal C^1_b$ with respect to $m_*^N$, the  law  $\nu_t^N$ of $\bX_t$ has density $h_t = P_t^N h_0$ with respect to $m_*^N$, and an integration by parts yields
\[\partial_t \mathcal H \po \nu_t^N | m_*^N\pf = - \mathcal I \po \nu_t^N | m_*^N \pf \,.\]
Thanks to Theorem~\ref{thm:LSI}, under Assumptions~\ref{assu:Poincare} and \ref{assu:LSI} with $4\lambda' < \rho$, we get that 
\begin{equation*}
\partial_t \mathcal H \po \nu_t^N | m_*^N\pf \leqslant - c \mathcal H \po \nu_t^N | m_*^N\pf + C \alpha_N\,,
\end{equation*}
where $c,C>0$ are independent from $N$ large enough, and $\alpha_N$ is as~\eqref{eq:cost-bound}, and then
\begin{equation}
\label{eq:subdecay}
 \mathcal H \po \nu_t^N | m_*^N\pf \leqslant e^{-ct} \mathcal H \po \nu_0^N | m_*^N\pf + \frac{C}c \alpha_N\,.
\end{equation}
 As we saw in Section~\ref{sec:W2C}, in the general semi-convex where~\eqref{eq:courbure} holds for some $\lambda>0$, it is always possible to get $\alpha_N$ of order $N^{1-2/d}$, i.e. sub-linear. This makes~\eqref{eq:subdecay} interesting, because the relative entropy $\mathcal H \po \nu_t^N | m_*^N\pf$ is rather of order $N$ (if $\nu_0^N = m^{\otimes N}$ for some $m\in\mathcal P_2(\R^d)$ for instance). In particular, \eqref{eq:subdecay} with $\alpha_N =o(N)$ is reminiscent of  the approximate convergence of \cite[Theorem 3.4]{Pavliotis}. More precisely, with the vocabulary of \cite{Pavliotis}, we can reformulate the defective LSI as a regularized LSI:
 \[  \underset{\nu^N:\mathcal H \po \nu^N | m_*^N \pf > \varepsilon_N }{\inf}\frac{\mathcal I \po \nu^N | m_*^N \pf }{\mathcal H \po \nu^N | m_*^N \pf} \geqslant  \frac{c}{2}\,. \]
with $\varepsilon_N = 2C\alpha_N/c$.

Dividing \eqref{eq:subdecay} by $N$ and sending $N$ to infinity, we may follow \cite[Section 6.2]{guillin2022uniform} or \cite[Theorem 5.6 and Corollary 5.9]{Pavliotis} to get a similar convergence for the non-linear limit flow. In fact, following these references,  dividing the defective non-linear LSI by $N$, we may directly send $N$ to infinty in 
\[\frac{c}N \mathcal H \po m^{\otimes N} | m_*^N\pf \leqslant \frac{1}{N}\mathcal I \po m^{\otimes N} | m_*^N\pf +  \frac{C \alpha_N}N\]
since $\alpha_N =o(N)$, to get a non-linear LSI
\begin{equation}
\label{eq:nonlinLSI}
\forall m\in\mathcal P_2(\R^d)\,,\qquad  \mathcal F(m) - \inf \mathcal F \leqslant \frac{1}{2\bar\rho}\mathcal I \po m|\hat m \pf\,,\qquad \hat m \propto \exp\po - \frac{\delta F}{\delta}(m,\cdot)\pf\,,
\end{equation}
with $\bar \rho = c/2$, uniqueness of the critical point of the free energy as in  \cite[Theorem 3.6]{Pavliotis} and uniform in time propagation of chaos in $\mathcal W_2$ distance as in \cite[Theorem 3.7]{Pavliotis}.  That being said, under Assumptions~\ref{assu:Poincare} and \ref{assu:LSI} with $\lambda,\lambda'$ small enough with respect to $\rho$, such an inequality (and then uniqueness of the critical point) can be more easily obtained working directly with the non-linear flow, as in \cite[Section 2.3]{MonmarcheReygner}. 

This discussion leads to state a conjecture which is weaker than \cite[Conjecture 1]{Pavliotis}: assume that \eqref{eq:nonlinLSI} holds, and let $\bar\rho $ be the optimal constant to have this. Is is true then that there exists a sequence $(\rho_N,\delta_N)$ such that $m_*^N$ satisfies a defective LSI with constants $(\rho_N,\delta_N)$ for all $N\in\N^*$, with $\rho_N \rightarrow \bar\rho$ and $\delta_N = o(N)$ as $N\rightarrow \infty$ (Conjecture 1 of \cite{Pavliotis} being the same with $\delta_N=0$)?

A natural question also, in view of \cite{MonmarcheReygner} would be, in some cases where uniform-in-$N$ (defective) LSI fails (because the mean-field non-linear LSI fails)   to prove \emph{local} uniform-in-$N$ defective LSI, of the form
\[\forall \nu^N \in \mathcal A_N,\qquad \mathcal H\po  \nu^N|m_*^N\pf  \leqslant c \mathcal I(\nu^N|m_*^N) + \delta_N\,,\]
where $\delta_N = o(N)$ and $\mathcal A_N$ is a subset of $\mathcal P_2(\R^{dN})$, for instance the set of probability measures such that $\mathbb E\po \mathcal W_2^2(\mu_{\bX},\rho_*)\pf \leqslant \varepsilon$ where $\rho_*$ is a critical point of $\mathcal F$ and $\varepsilon>0$ is small enough, and then investigate possible consequences of such inequalities.

Besides, if the motivation is to minimize the non-linear free energy $\mathcal F$ with a particle system, starting from Theorem~\ref{thm:LSI}, we can obtain an approximate convergence in the spirit of \cite{ChenRenWang} or \cite[Section 5]{MonmarcheReygner} from~\eqref{eq:subdecay}, using entropy bounds as in Section~\ref{sec:entropyIneq}. 

These topics are out of the scope of this note. 

\section{Poincaré inequality}\label{sec:Poincare}

Here, the main difference with respect to the convex case is the following, whose role is to replace \cite[Lemma 4]{Songbo}.

\begin{lem}\label{lem:Hessienne}
Under Assumption~\ref{assu:Poincare}, for all $\bx,\bv\in (\R^d)^N$,
\[\sum_{i,j=1}^N v_i\cdot D_m^2 F(\mu_{\bx},x_i,x_j) v_j \geqslant - \lambda N \sum_{i=1}^N |v_i|^2\,.  \]
\end{lem}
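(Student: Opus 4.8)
\medskip
\noindent\emph{Proof idea.} The plan is to recognise the quadratic form $\sum_{i,j}v_i\cdot D_m^2 F(\mu_{\bx},x_i,x_j)v_j$ as the leading term, as $\varepsilon\to 0$, of the second flat derivative of $F$ along the linear interpolation between $\mu_{\bx}$ and a small translate of it, and then to read off the lower bound from the semi-convexity inequality~\eqref{eq:courbure}. Fix $\bx,\bv\in(\R^d)^N$. For $\varepsilon>0$, set $\mu_{\bx+\varepsilon\bv}=\frac1N\sum_i\delta_{x_i+\varepsilon v_i}$, let $\sigma_\varepsilon:=\mu_{\bx+\varepsilon\bv}-\mu_{\bx}$ (a signed measure of total mass $0$), $m^\varepsilon_s:=(1-s)\mu_{\bx}+s\,\mu_{\bx+\varepsilon\bv}$ and $G_\varepsilon(s):=F(m^\varepsilon_s)$ for $s\in[0,1]$. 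By Assumption~\ref{assu:Poincare}\ref{assu1iii}, $G_\varepsilon\in\mathcal C^2$ with $G_\varepsilon''(s)=\int\int\frac{\delta^2 F}{\delta^2 m}(m^\varepsilon_s,x,x')\,\sigma_\varepsilon(\dd x)\sigma_\varepsilon(\dd x')$, and applying the fundamental theorem of calculus twice in the space variables (the first-order contributions cancelling thanks to the $+,-,-,+$ structure) one gets, for any probability measure $m$, the exact identity
\[
\int\int\frac{\delta^2 F}{\delta^2 m}(m,x,x')\,\sigma_\varepsilon(\dd x)\sigma_\varepsilon(\dd x')=\frac{\varepsilon^2}{N^2}\sum_{i,j=1}^N\int_0^1\!\!\int_0^1 v_i\cdot D_m^2 F\po m,x_i+s\varepsilon v_i,x_j+u\varepsilon v_j\pf v_j\,\dd s\,\dd u.
\]
Since $D_m^2 F$ is jointly continuous and bounded by $M_{mm}^F$ (Assumption~\ref{assu:Poincare}\ref{assu1iv}) and $m^\varepsilon_s\to\mu_{\bx}$ as $\varepsilon\to0$, dominated convergence yields $\varepsilon^{-2}G_\varepsilon''(s)\to A$ as $\varepsilon\to0$, with $\varepsilon^{-2}G_\varepsilon''$ uniformly bounded in $\varepsilon\in(0,1]$ and $s\in[0,1]$, where $A:=\frac1{N^2}\sum_{i,j}v_i\cdot D_m^2 F(\mu_{\bx},x_i,x_j)v_j$.

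Next I would apply~\eqref{eq:courbure} with $t=\tfrac12$, $\mu=\mu_{\bx+\varepsilon\bv}$, $\nu=\mu_{\bx}$: as $\tfrac12\mu_{\bx+\varepsilon\bv}+\tfrac12\mu_{\bx}=m^\varepsilon_{1/2}$, it reads $G_\varepsilon(\tfrac12)\leqslant\tfrac12 G_\varepsilon(0)+\tfrac12 G_\varepsilon(1)+\tfrac\lambda8\mathcal W_2^2(\mu_{\bx},\mu_{\bx+\varepsilon\bv})$, while the coupling $x_i\mapsto x_i+\varepsilon v_i$ gives $\mathcal W_2^2(\mu_{\bx},\mu_{\bx+\varepsilon\bv})\leqslant\frac{\varepsilon^2}N\sum_i|v_i|^2$. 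On the other hand, Taylor's formula with integral remainder and the cancellation of the first-order terms give
\[
G_\varepsilon(\tfrac12)-\tfrac12 G_\varepsilon(0)-\tfrac12 G_\varepsilon(1)=\int_0^{1/2}\po\tfrac12-r\pf G_\varepsilon''(r)\,\dd r-\tfrac12\int_0^1(1-r)G_\varepsilon''(r)\,\dd r,
\]
which by the previous step equals $\varepsilon^2 A\po\int_0^{1/2}(\tfrac12-r)\,\dd r-\tfrac12\int_0^1(1-r)\,\dd r\pf+o(\varepsilon^2)=-\tfrac18\varepsilon^2 A+o(\varepsilon^2)$. Comparing the two expressions, dividing by $\varepsilon^2/8>0$ and letting $\varepsilon\to0$ gives $-A\leqslant\frac\lambda N\sum_i|v_i|^2$, i.e.\ $\sum_{i,j}v_i\cdot D_m^2 F(\mu_{\bx},x_i,x_j)v_j=N^2 A\geqslant-\lambda N\sum_i|v_i|^2$, which is the claim.

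The delicate point --- and the reason this is not literally the statement ``the second derivative of $F$ along linear interpolations is at least $-\lambda\mathcal W_2^2$'' --- is that the $t(1-t)$-weighted correction in~\eqref{eq:courbure} does \emph{not} control a central second difference $G_\varepsilon(s-\delta)-2G_\varepsilon(s)+G_\varepsilon(s+\delta)$ as $\delta\to0$: the bound it produces there degrades like $\delta^{-1}$. This is why the argument must keep $t=\tfrac12$ fixed and let only the perturbation size $\varepsilon$ go to $0$; then the Wasserstein correction is genuinely of order $\varepsilon^2$, the same order as the quadratic form, and the comparison closes. The rest is the routine bookkeeping of the $o(\varepsilon^2)$ errors, made immediate by the uniform bound $|D_m^2 F|\leqslant M_{mm}^F$.
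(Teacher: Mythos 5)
Your argument is correct and follows essentially the same route as the paper: apply the semi-convexity inequality~\eqref{eq:courbure} at $t=\tfrac12$ to two empirical measures differing by a small shift, bound $\mathcal W_2^2$ by the obvious coupling, and identify the midpoint defect with the quadratic form $\sum_{i,j}v_i\cdot D_m^2F(\mu_{\bx},x_i,x_j)v_j$ at order $\varepsilon^2$ via the second flat derivative. The only (immaterial) differences are that the paper perturbs symmetrically to $\bx\pm h\bv$ rather than comparing $\bx$ with $\bx+\varepsilon\bv$, and that you make the Taylor/fundamental-theorem bookkeeping more explicit.
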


\begin{proof}
Taking $t=1/2$ in \eqref{eq:courbure}, 
\begin{equation}
\label{eq:demo}
F\po \frac12 \mu_{\bx+h\bv} + \frac12 \mu_{\bx-h\bv}  \pf  \leqslant   \frac12 F\po \mu_{\bx+h\bv}\pf + \frac12 F \po \mu_{\bx-h\bv} \pf + \frac{\lambda}{8} \mathcal W_2^2 \po \mu_{\bx+h\bv},\mu_{\bx-h\bv}\pf  
\end{equation}
Coupling $\mu_{\bx+h\bv}$ and $\mu_{\bx-h\bv}$ by $(x_I+hv_I,x_{I}+hv_I)$ where $I$ is uniformly distributed over $\cco 1,N\ccf$ yields
\[\mathcal W_2^2 \po \mu_{\bx+h\bv},\mu_{\bx-h\bv}\pf  \leqslant \frac{4h^2 }{N} \sum_{i=1}^N |v_i|^2\,. \]
Besides,  using that $\frac{\delta^2 F}{\delta \mu^2}(\mu,z,z')$ is continuous in $\mu$ and $\mathcal C^2$ in $(z,z')$,
\begin{eqnarray*}
\lefteqn{ F\po \frac12 \mu_{\bx+h\bv} + \frac12 \mu_{\bx-h\bv}  \pf - \frac12 F\po \mu_{\bx+h\bv}\pf - \frac12 F \po \mu_{\bx-h\bv} \pf}\\
  &=&  - \frac12 \int_{(\R^d)^2} \frac{\delta^2 F}{\delta \mu^2 } (\mu_{\bx},z,z') \po  \mu_{\bx+h\bv} -  \mu_{\bx-h\bv}\pf ^{\otimes}(\dd z,\dd z') + \underset{h\rightarrow 0}o(h^2)\\
  &= &  -\frac12 \frac{h^2} {N^2} \sum_{i=1}^N \sum_{j=1}^N v_i \cdot \na^2_{x,y} \frac{\delta^2 F}{\delta \mu^2 } (\mu_{\bx},x_i,x_i) v_j+ \underset{h\rightarrow 0}o(h^2)\,.
  \end{eqnarray*}
  Conclusion follows by diving \eqref{eq:demo} by $h^2$ and sending $h$ to zero.
\end{proof}

\begin{proof}[Proof of Theorem~\ref{thm:Poincare}]
We follow Step 2 of the proof of \cite[Theorem 1]{Songbo}, based on the approach by \cite{guillin2022uniform} (itself an adaptation in the non-independent case of the usual proof of the tensorization property of Poincaré inequalities). Thanks to \cite[Proposition 4.8.3]{BakryGentilLedoux}, a Poincaré inequality for $m_*^N$ with some constant $\rho'>0$ is equivalent to
\begin{equation}\label{eq:preuvePoincareGamma2}
\forall f\in\mathcal C_b^2(\R^{dN}),\qquad \rho' \int_{\R^{dN}} |\na f|^2 \dd m_*^N \leqslant  \int_{\R^{dN}} \Gamma_2(f)\dd m_*^N\,,
\end{equation}
with  the $\Gamma_2$ Bakry-Emery operator (see \cite{BakryGentilLedoux}) which is here
\[\Gamma_2(f) = | \na^2 f|_{HS}^2 +  \na f \cdot \na U_N \na f\,, \] 
where $|\cdot|_{HS}$ stands for the Hilbert-Schmidt norm. Noticing that 
\[\na^2_{i,j} U_N(\bx) = \frac1N D_m^2 F(\mu_{\bx},x_i,x_j) + \na D_m F (\mu_{\bx},x_i) \1_{i=j} \]
and bounding $|\na^2 f|_{HS}^2 \geqslant \sum_{i=1}^N |\na_i^2 f|_{HS}^2$ gives
\begin{eqnarray*}
\int_{\R^{dN}} \Gamma_2(f) \dd m_*^N & \geqslant  & \sum_{i=1}^N \int_{\R^{dN}} \po |\na^2_i f|^2_{HS} + \na_i f \cdot D_m F(\mu_{\bx},x_i) \na_i f  \pf  \dd m_*^N\\
& &  + \frac1N \sum_{i,j=1}^N \int_{\R^{dN}} \na_i f \cdot D_m^2 F(\mu_{\bx},x_i,x_j) \na_j f    \dd m_*^N\,. 
\end{eqnarray*}
Denote by $A$ and $B$ respectively the term in the first and second line of the right hand side. Using  the uniform conditional Poincaré inequality (in its equivalent integrated $\Gamma_2$ form) and the Lipschitz condition of Assumption~\ref{assu:Poincare}$\ref{assu1iv}$,  it is established in the proof of \cite[Theorem 1]{Songbo} that
\[
A \geqslant \po \rho_N - \frac{M_{mm}^F}{N}\pf  \sum_{i=1}^N \int_{\R^d}  |\na_i f|^2     \dd m_*^N\,.
\]
Besides, Lemma~\ref{lem:Hessienne} gives
\[B \geqslant - \lambda \sum_{i=1}^N \int_{\R^d}  |\na_i f|^2     \dd m_*^N \,. \]
Hence, \eqref{eq:preuvePoincareGamma2} holds with $\rho' = \rho_N - \lambda - \frac{M_{mm}^F}{N}$, which concludes.
\end{proof}

\section{Log-Sobolev inequality}\label{sec:LSI}

In \cite{Songbo}, S. Wang use the defective log-Sobolev inequality established in his previous work \cite{ChenRenWang} with F. Chen and Z. Ren. Convexity is used in the proof, so we have to follow and adapt it. Notice that,  in \cite[Remark~5.2]{ChenRenWang}, the authors already mention that they expect their approach to work in the semi-convex case when the curvature is not too negative. Rather than the details for this extension, the main contribution of the present note is thus the introduction here of the condition~\eqref{eq:courbure-cost}, which allows to recover cases where $\alpha_N$ is uniformly bounded (hence a uniform log-Sobolev inequality in Theorem~\ref{thm:LSI}) which would not be the case simply with the semi-convexity condition~\eqref{eq:courbure} as discussed in Section~\ref{sec:W2C}.

In the rest of this section, Assumptions~\ref{assu:Poincare} and \ref{assu:LSI} are enforced. Before proceeding to the proof of Theorem~\ref{thm:LSI}, we start by establishing some preliminary bounds comparing the various entropies and free energies involved.

\subsection{Entropy inequalities}\label{sec:entropyIneq}

Dividing by $t$ and letting $t\rightarrow 0$ in~\eqref{eq:courbure-cost} gives the pent inequality
\begin{equation}
\label{eq:pent1}
\int_{\R^d} \frac{\delta F}{\delta m}(\mu)(\nu-\mu) \leqslant F(\nu)-F(\mu) + \mathcal C(\nu,\mu)
\end{equation}
or, simply rearranging the terms, 
\begin{equation}
\label{eq:pent2}
F(\mu)-F(\nu) \leqslant  \int_{\R^d} \frac{\delta F}{\delta m}(\mu)(\mu-\nu)  +  \mathcal C(\nu,\mu)\,.
\end{equation}
In particular, thanks to \eqref{eq:cost-bound}, taking the expectation of~\eqref{eq:pent2} with $\nu = \mu_{\bX}$ for $\bX\sim m^N$ and $\nu=m_\infty$ reads 
\begin{multline}
N \mathbb E \co \int_{\R^d} \frac{\delta F}{\delta m}(\mu_{\bX},y)(\mu_{\bX} - m_\infty)(\dd y)\cf  \geqslant N \mathbb E \co  F(\mu_{\bX})\cf - N F(m_\infty) \\
- \lambda' \mathcal W_2^2 \po m^N,m_\infty^{\otimes N}\pf - \alpha_N\,.\label{eq:entropyN}
\end{multline}
Proceeding similarly with~\eqref{eq:pent1} reads
\begin{multline}
 N \mathbb E \co F(\mu_{\bX}) \cf  - N F(m_\infty) \geqslant N  \mathbb E \co \int_{\R^d} \frac{\delta F}{\delta m}(m_\infty,y)(\mu_{\bX} - m_\infty)(\dd y)\cf  \\
- \lambda' \mathcal W_2^2 \po m^N,m_\infty^{\otimes N}\pf - \alpha_N\,.\label{eq:entropyN2}
\end{multline}

Besides, since $m_\infty$ is a minimizer of $\mathcal F$, it is a stationary solution of the associated Wasserstein gradient flow, which means that $m_\infty \propto \exp\po - \frac{\delta F}{\delta m }(m_\infty,\cdot)\pf$ (see \cite[Proposition 4.1]{Pavliotis}). In particular, Assumption~\ref{assu:LSI}$\ref{assu2ii}$ implies that it satisfies a LSI with constant $\rho$.

 For $N\in\N$ and $m^N \in\mathcal P_2(\R^{dN})$ we write the so-called $N$-particle free energy
 \[\mathcal F^N(m^N) = N\int_{\R^{dN}} F(\mu_{\bx}) m^N(\dd \bx) + H(m^N)\,.\]
 Since  
 \[\mathcal F^N(m^N) - \mathcal F^N(m_*^N)  = \mathcal H\po m^N | m_*^N\pf\,,\]
 in particular $\mathcal F^N$ is lower bounded and reaches its infimum at $m_*^N$.

Adapting the proof of  \cite[Lemma 5.2]{ChenRenWang} to our case thanks to~\eqref{eq:entropyN2},
\begin{eqnarray}
\mathcal F^N(m^N) - N \mathcal F(m_\infty)  & = & N \mathbb E \po  F(\mu_{\bX}) - F(m_\infty) \pf + H(m^N) - N H(m_\infty) \nonumber  \\
& \geqslant & N  \mathbb E \co \int_{\R^d} \frac{\delta F}{\delta m}(m_\infty,y)(\mu_{\bX} - m_\infty)(\dd y)\cf \nonumber \\
& & \ +\  H(m^N) - N H(m_\infty)  - \lambda' \mathcal W_2^2 \po m^N,m_\infty^{\otimes N}\pf - \alpha_N \nonumber  \\
  & = & \mathcal H\po m^N |m_\infty^{\otimes N}\pf  - \lambda' \mathcal W_2^2 \po m^N,m_\infty^{\otimes N}\pf - \alpha_N \label{eq:FNetH} \,.
\end{eqnarray}

Since $m_\infty^{\otimes N}$  satisfies a LSI with constant $\rho$, it also satisfies the Talagrand inequality
\begin{equation}
\label{eq:Talagrand}
\mathcal W_2^2\po m^N,m_\infty^{\otimes N}\pf \leqslant \frac{2}{\rho} \mathcal H\po m^N|m_\infty^{\otimes N}\pf \,,
\end{equation}
which together with~\eqref{eq:FNetH} gives
\begin{equation}
\mathcal F^N(m^N) - N \mathcal F(m_\infty)  \geqslant  \po 1 - \frac{2\lambda'}{\rho}\pf  \mathcal H\po m^N |m_\infty^{\otimes N}\pf   - \alpha_N 
\label{eq:FNetH3}
\end{equation}
When $2\lambda'\leqslant \rho$, applying this with $m^N = m_*^N$ gives
\[\mathcal F^N(m_*^N) - N \mathcal F(m_\infty) \geqslant - \alpha_N \,,\]
and thus
\begin{equation}
\label{eq:FNetH2}
\mathcal H(m^N|m_*^N) = \mathcal F^N(m^N) - \mathcal F^N(m_*^N) \leqslant \mathcal F^N(m^N) - N \mathcal F(m_\infty) + \alpha_N\,.
\end{equation}

\subsection{Defective log-Sobolev inequality}

Under Assumption~\ref{assu:Poincare}, in the proof of \cite[Theorem 2.6]{ChenRenWang}, the following is established, without requiring yet  that $F$ is convex: for all $\varepsilon\in(0,1)$,
\begin{multline}
\mathcal I \po m^N|m_*^N\pf \geqslant - \po \varepsilon^{-1}-1\pf \Delta_1  
+ 4\rho (1-\varepsilon) \Big[  N \int_{\R^{dN}} \int_{\R^d} \frac{\delta F}{\delta m}(\mu_{\bx},y)(\mu_{\bx} - m_\infty)(\dd y)m^N(\dd \bx)\\  + H(m^N) - N H(m_\infty) - \Delta_2\Big] \label{eq:LSI1}
\end{multline}
(this is the combination of Equations (5.1), (5.5) and (5.6) of \cite{ChenRenWang}) where $\Delta_1$ and $\Delta_2$ are error terms (related to two change of measures, the first one performed to isolate one particle to be independent from the other and then the second one to go back to the true joint distribution) satisfying  
\begin{eqnarray*}
|\Delta_1| & \leqslant & 6 (M_{mm}^F)^2  \po \frac{1}{N}\mathcal W_2^2\po m^N,m_\infty^{\otimes N}\pf  + \frac{d}{2\rho}\pf  \\
|\Delta_2| & \leqslant & M_{mm}^F \po \frac{4}{N}\mathcal W_2^2\po m^N,m_\infty^{\otimes N}\pf  + \frac{5d}{2\rho}\pf 
\end{eqnarray*}
(this is (5.12) and (5.13) of \cite{ChenRenWang} with the bound on the variance of $m_\infty$ implied by the Poincaré inequality which follows from the uniform LSI of Assumption~\ref{assu:LSI}$\ref{assu2ii}$). 
Inserting \eqref{eq:entropyN} in~\eqref{eq:LSI1} yields
\begin{multline}
\mathcal I \po m^N|m_*^N\pf \geqslant - \po \varepsilon^{-1}-1\pf \Delta_1  \\
+ 4\rho (1-\varepsilon) \co  \mathcal F^N(m^N) - N \mathcal F(m_\infty) - \Delta_2 - \lambda' \mathcal W_2^2 \po m^N,m_\infty^{\otimes N}\pf - \alpha_N\cf\\
\geqslant  4\rho (1-\varepsilon) \co  \mathcal F^N(m^N) - N \mathcal F(m_\infty) - \tilde \lambda_N \mathcal W_2^2 \po m^N,m_\infty^{\otimes N}\pf - \tilde \alpha_N \cf \,,\label{eq:fin}
\end{multline}
with $\tilde \lambda_N$ given by \eqref{eq:lambdatilde} and
\begin{align*}
\tilde \alpha_N &=  \alpha_N  + \frac{M_{mm}^F d}{\rho } \po \frac52 +  \frac{3M_{mm}^F (\varepsilon^{-1}-1)}{4\rho(1-\varepsilon)} \pf \,.  
\end{align*}
Recall the definition of $\beta_N$ in~\eqref{eq:betaN}.  The condition  $N>N_0$ implies that $4\tilde \lambda_N < \rho$, itself  equivalent to $\beta_N \in(0,1)$.  For $N>N_0$, applying the Talagrand inequality~\eqref{eq:Talagrand},  then~\eqref{eq:FNetH3} and finally~\eqref{eq:FNetH2}
\begin{eqnarray*}
  \mathcal F^N(m^N) - N \mathcal F(m_\infty) - \tilde \lambda_N \mathcal W_2^2 \po m^N,m_\infty^{\otimes N}\pf  
 & \geqslant &  \mathcal F^N(m^N) - N \mathcal F(m_\infty) - \frac{2\tilde \lambda_N}{\rho}\mathcal H\po m^N|m_\infty^{\otimes N}\pf \\
  & \geqslant & \po 1 - \beta_N \pf  \po  \mathcal F^N(m^N) - N \mathcal F(m_\infty) \pf - \beta_N \alpha_N \\
  & \geqslant & \po 1 - \beta_N \pf   \mathcal H(m^N|m_*^N) - \alpha_N \,.
\end{eqnarray*}
Inserting this in \eqref{eq:fin}, we obtain the following defective LSI: for $N>N_0$ and any $\varepsilon\in(0,1)$, for all smooth  $m^N \in \mathcal P_2(\R^{dN})$, 
\begin{equation}
\label{eq:defectiveLSI}
\mathcal I \po m^N|m_*^N\pf \geqslant  4\rho (1-\varepsilon) (1-\beta_N) \mathcal H(m^N|m_*^N) + \delta_N
\end{equation}
with $\delta_N = 4\rho(1-\varepsilon)( \alpha_N + \tilde \alpha_N)$.

\subsection*{Acknowledgments}

This research is supported by the projects SWIDIMS (ANR-20-CE40-0022) and CONVIVIALITY (ANR-23-CE40-0003) of the French National Research Agency. P.M. thanks Katharina Schuh for fruitful discussions on the work \cite{Songbo2}, related to the present work.

\bibliographystyle{plain} 
\bibliography{biblio}

\end{document}